\newtheorem{theorem}{Theorem}[section]
\newtheorem{thm}[theorem]{Theorem}
\newtheorem{definition}[theorem]{Definition} 
\newtheorem{lem}[theorem]{Lemma}
\theoremstyle{remark}
\numberwithin{claim}{theorem}
\begin{document}

 \title{On surfaces endowed with canonical principal direction in Euclidean spaces}

\author{Alev Kelleci\footnote{F\i rat University, Faculty of Science, Department of Mathematics, 23200 Elaz\i\u g, Turkey. e-mail: alevkelleci@hotmail.com},  Nurettin Cenk Turgay \footnote{Istanbul Technical University, Faculty of Science and Letters, Department of  Mathematics, 34469 Maslak, Istanbul, Turkey. e-mail: turgayn@itu.edu.tr} and Mahmut Erg\" ut\footnote{Nam\i k Kemal University, Faculty of Science and Letters, Department of Mathematics, 59030 Tekirda\u g, Turkey. email: mergut@nku.edu.tr}}

\maketitle

\begin{abstract}
In this paper, we introduce canonical principal direction 
(CPD) submanifolds with higher codimension in Euclidean spaces. 
We obtain the complete classification of surfaces endowed with CPD in the Euclidean 4-space.  

\noindent \textbf{MSC 2010 Classification.} 53B25(Primary); 53A35, 53C50 (Secondary)

\noindent \textbf{Keywords.}
\end{abstract}

\section{Introduction}\label{sec:1}
Let $N$ be a Riemannian manifold, $M$ an immersed hypersurface of $N$ and $X$ a vector field in $N$. $M$  is said to have canonical principle direction (CPD) with relative to $X$ if the projection of $X$ onto the tangent space of $M$ gives one of principle directions of $M$, \cite{GarnicaPalnas}.  One of the most common examples of hypersurfaces with CPD is
rotational hypersurfaces in Euclidean spaces which have canonical principal direction relative $X$ if $X$ is chosen to be  a vector field parallel to its rotation axis. 

On the other hand, a submanifold in the Euclidean space is said to be a constant angle surface if there is a constant direction $k$ which makes constant angle with the tangent plane at every point of that surface. There are many classification results for such hypersurfaces called as constant angle (CA) hypersurfaces obtained so far, in different ambient spaces, \cite{CS2007,DillenMunt2009,FuNistor2013,GSK2011,AM2017,LopezMunt2011,MunteanuNistor2009}. Before we proceed, we would like to note that a CAS surface in the Euclidean 3-space has CPD relative to $k$. Because of this reason, hypersurfaces with CPD relative to a fixed direction in Euclidean spaces have caught interest of some geometers in the recent years. For example, surfaces with CPD in the  Euclidean 3-space $\mathbb E^3$ have been studied in \cite{MN2011}. Then, this study was moved into the Minkowski 3-space $\mathbb E^3_1$ in \cite{ANM2017,Nistor2013}. Furthermore, CPD surfaces in product spaces also take attention of some geometers. For example, some classification results on   surfaces with CPD relative to $\partial_t$ in $\mathbb S^2\times \mathbb R$ and $\mathbb H^2\times \mathbb R$ have been obtained in \cite{DFJvK2009,DMuntNistr2011} (See also \cite{FuNistor2013}), where $\partial_t$  denotes   the unit vector field tangent to the second factor. 

On the other hand, Tojeiro studied CPD hypersurfaces of $\mathbb S^n\times \mathbb R$ and $\mathbb H^n\times \mathbb R$ in \cite{To}. Later, Mendon\c ca and Tojeiro give generalization of the notion of CPD hypersurfaces into higher codimensional submanifolds.  For this porpose, they give the definition  class $\mathcal A$  in \cite{MT}. An immersion
 $f:M^n\rightarrow Q^n_c \times \mathbb R$ is said to belongs to class $\mathcal A$ 
immersions   if the tangential part of $\partial_t$ is one of principal directions
 of all shape operators of $f$. By a similar way, we would like to give the following definition of CPD submanifolds in Euclidean spaces.
\begin{definition}\label{DefoFCPD}
Let $M^n$ be a submanifold in $\mathbb{E}^{m}$ and $k$ be a fixed direction in $\mathbb{E}^{m}$.
 $M$ is said to be a submanifold endowed with canonical principal direction, (shortly, a CPD 
submanifold) if the tangential component $k^T$ of $k$ is one of principal directions of all shape 
operators of $M$.
\end{definition}

 The aim of this paper is to obtain complete classification of CPD surfaces  in the Euclidean 4-space $\mathbb E^4$.   In Sect. 2, we introduce the notation 
that we will use and give a brief summary of basic definitions in theory of 
submanifolds in Euclidean spaces. In Sect. 3, we obtain the complete classification
 of CPD surfaces   in the Euclidean 4-space.

\section{Prelimineries}
Let $\mathbb E^m$ denote the Euclidean $m$-space with the canonical Euclidean metric tensor given by  
$$
\widetilde g=\langle\ ,\ \rangle=\sum\limits_{i=1}^m dx_i^2,
$$
where $(x_1, x_2, \hdots, x_m)$  is a rectangular coordinate system in $\mathbb E^m$.  

Consider an $n$-dimensional Riemannian submanifold  of the space $\mathbb E^m$. We denote Levi-Civita connections of $\mathbb E^m$ and $M$ by $\widetilde{\nabla}$ and $\nabla$, respectively. The Gauss and Weingarten formulas are given, respectively, by
\begin{eqnarray}
\label{MEtomGauss} \widetilde\nabla_X Y&=& \nabla_X Y + h(X,Y),\\
\label{MEtomWeingarten} \widetilde\nabla_X \xi&=& -S_{\xi}(X)+D_X {\xi},  
\end{eqnarray}
whenever $X,Y$ are tangent and $\xi$ is normal vector field on $M$, where $h$,  $D$  and  $S$ are the second fundamental form, the normal connection and  the shape operator of $M$, respectively. It is well-known that the shape operator and the second fundamental form are related by  
$$\left\langle h(X, Y), {\xi} \right\rangle = \left\langle S_{\xi}X, Y \right\rangle.$$

The Gauss and Codazzi equations are given, respectively, by
\begin{eqnarray}
\label{MinkGaussEquation} \langle R(X,Y)Z,W\rangle&=&\langle h(Y,Z),h(X,W)\rangle-
\langle h(X,Z),h(Y,W)\rangle,\\
\label{MinkRicci} \langle R^D(X,Y)\xi,\eta \rangle&=&\langle \lbrack S_\xi,S_\eta]X,Y\rangle, \\
\label{MinkCodazzi} (\nabla_X h )(Y,Z)&=&(\nabla_Y h )(X,Z),
\end{eqnarray}
whenever $X,Y,Z,W$ are tangent to $M$, where $R,\; R^D$ are the curvature tensors associated with connections $\nabla$ 
and $D$, respectively. We note that  $\bar \nabla h$ is defined by
$$(\nabla_X h)(Y,Z)=D_X h(Y,Z)-h(\nabla_X Y,Z)-h(Y,\nabla_X Z).$$ 
A submanifold  $M$ is said to have flat normal bundle if $R^D=0$  identically.

The mean curvature vector field $H$ of the surface $M$ is defined as 
\begin{equation} \label{MC}
H=\frac{1}{2}tr h.
\end{equation}\label{GC}
If $M$ is a surface, i.e, $n=2$, then the Gaussian curvature $K$ of the surface $M^2$ is defined as 
\begin{equation}
K=\frac{R(X,Y,X,Y)}{Q(X,Y)},
\end{equation}
if $X$ and $Y$ are chosen so that $Q(X,Y)=\langle X,X\rangle\langle Y,Y\rangle-\langle X,Y\rangle^2$ does not vanish.

\section{CPD Surfaces in $\mathbb E^4$} \label{S:Classification}

In this section, we obtain classification of CPD surfaces in $\mathbb E^4$.

Let $M$ be a surface in $\mathbb E^4$ with CPD relative to $k$. Without loss of generality, we  assume that $k=(1,0,0,0)$. Then, one can define a tangent vector field $e_1$ and a normal vector field $e_3$ with the equation
\begin{equation} \label{CPDExpofS100}
k=\cos \theta e_1+\sin \theta {e_3}
\end{equation}
for a smooth function $\theta$. Let $e_2$ and ${e_4}$ be a unit tangent vector field and a unit normal vector field, satisfying $\langle e_1,e_2\rangle=0$ and $\left\langle {e_3},{e_4}\right\rangle=0$, respectively. By a simple computation considering \eqref{CPDExpofS100} we obtain the following lemma. Note that we put $h^\beta_{ij}=\langle h(e_i,e_j),e_\beta \rangle=\langle S_\beta e_i,e_j \rangle$, where $S_\beta=S_{e_\beta}$.
\begin{lem}\label{CPDCase1ClassThmDiagonSpacelikekClm1}
The Levi-Civita connection $\nabla$ of $M$ is given by
\begin{subequations} \label{CPDCASEISpacelikeLeviCivitaEq1ALL}
\begin{eqnarray}
\label{CPDCASEISpacelikeLeviCivitaEq1a}\nabla _{e_{1}}e_{1}=\nabla _{e_{1}}e_{2}=0, &&
\\\label{CPDCASEILeviCivitaEq1b}
\nabla _{e_{2}}e_{1}=\tan \theta h_{22}^3e_2, &\quad &\nabla _{e_{2}}e_{2}=-\tan \theta h_{22}^3e_1. 
\end{eqnarray}
\end{subequations}
and the matrix representations of shape operator $S$ of $M$ with respect to $\{e_1,e_2\}$ is
\begin{align}\label{CPDCASEISpacelikeLeviCivitaEq1}
\begin{split}
S_3=\left(\begin{array}{cc}
-e_1(\theta)&0\\
0&h_{22}^3
\end{array}\right), \quad
&S_4=\left(\begin{array}{cc}
0&0\\
0&h_{22}^4
\end{array}\right)
\end{split}
\end{align}
for functions $h_{11}^4$, $h_{12}^4$, $h_{22}^3$ and $h_{22}^4$ satisfying
\begin{subequations} \label{CPDClassThmDiagonSpacelikekCod1Case1ALL}
\begin{eqnarray} 
\label{CPDClassThmDiagonSpacelikekCod1Case11a} e_1(h_{22}^3)=\tan \theta h_{22}^3(h_{11}^3-h_{22}^3), \\
\label{CPDClassThmDiagonSpacelikekCod1Case11b} e_1(h_{22}^4)=-\tan \theta h_{22}^3 h_{22}^4, \\
\label{CPDClassThmDiagonSpacelikekCod1Case11c} h_{11}^4=0, \quad h_{12}^4=0.
\end{eqnarray}
\end{subequations}
Furthermore, $\theta$ satisfies
\begin{equation}\label{CPDCASEISpacelikeLeviCivitaEq1Theta}
e_2(\theta)=0.
\end{equation}
\end{lem}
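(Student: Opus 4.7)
The plan is to exploit the fact that $k$ is parallel in the ambient $\mathbb{E}^4$, so $\widetilde\nabla_X k = 0$ for every $X\in TM$, and to combine this with the Codazzi equation. The CPD hypothesis is used at once: since $e_1$ must be a common eigenvector of $S_3$ and $S_4$ (being proportional to $k^T$), the off-diagonal entries $h^3_{12} = h^4_{12} = 0$ vanish, and $S_\beta e_2 = h^\beta_{22} e_2$ for $\beta = 3, 4$. Differentiating $k = \cos\theta\, e_1 + \sin\theta\, e_3$ and applying the Gauss and Weingarten formulas then converts $\widetilde\nabla_X k = 0$ into a vector identity, which I would split into its tangential and normal components.

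With $X = e_2$, under the generic assumption $\cos\theta \ne 0$, $\sin\theta \ne 0$, the tangential part yields $e_2(\theta) = 0$ (from the $e_1$-coefficient) and $\nabla_{e_2} e_1 = \tan\theta\, h^3_{22}\, e_2$ (from the $e_2$-coefficient), whence $\nabla_{e_2} e_2 = -\tan\theta\, h^3_{22}\, e_1$ by metric compatibility; the normal part forces $D_{e_2} e_3 = 0$ and hence $D_{e_2} e_4 = 0$. With $X = e_1$, the fact that $\nabla_{e_1} e_1$ is proportional to $e_2$ forces $\nabla_{e_1} e_1 = 0$ (so $\nabla_{e_1} e_2 = 0$) and $h^3_{11} = -e_1(\theta)$; the $e_3$-component of the normal part reproduces this relation, while the $e_4$-component produces the key linkage
\[
\cos\theta\, h^4_{11} + \sin\theta\, b = 0, \qquad b := \langle D_{e_1} e_3, e_4\rangle.
\]

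To derive the evolution equations \eqref{CPDClassThmDiagonSpacelikekCod1Case11a}--\eqref{CPDClassThmDiagonSpacelikekCod1Case11b} and the vanishing $h^4_{11} = 0$, I would substitute all the connection data into the Codazzi identity $(\nabla_{e_1} h)(e_2, e_2) = (\nabla_{e_2} h)(e_1, e_2)$. After expansion, the $e_3$- and $e_4$-components of the normal bundle read
\begin{align*}
e_1(h^3_{22}) - h^4_{22}\, b &= \tan\theta\, h^3_{22}(h^3_{11} - h^3_{22}),\\
e_1(h^4_{22}) + h^3_{22}\, b &= \tan\theta\, h^3_{22}(h^4_{11} - h^4_{22}).
\end{align*}
Using $b = -\cot\theta\, h^4_{11}$ to eliminate $b$ and invoking $\tan\theta + \cot\theta = 1/(\sin\theta\cos\theta) \ne 0$, the two identities reduce to the stated form precisely when $h^3_{22}\, h^4_{11} = 0 = h^4_{22}\, h^4_{11}$; off the locus where $h(e_2, e_2) = 0$ this forces $h^4_{11} = 0$, whence $b = 0$ and the pair collapses exactly to \eqref{CPDClassThmDiagonSpacelikekCod1Case11a}--\eqref{CPDClassThmDiagonSpacelikekCod1Case11b}.

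The main technical obstacle is handling the degenerate locus on which $h(e_2, e_2) \equiv 0$, where $e_2$ is asymptotic and $M$ is locally flat and ruled along $e_2$. On the open set where this degeneracy fails, $h^4_{11} = 0$ holds, and in generic situations it then extends by continuity to the whole of $M$; any remaining open ruled pieces would have to be segregated and folded into the ensuing case analysis of the classification.
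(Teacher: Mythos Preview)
Your overall strategy---differentiate the constant vector $k$, split into tangential and normal parts via the Gauss--Weingarten formulas, then invoke Codazzi for the evolution equations---is exactly what the paper does. The one substantive difference is the handling of the normal connection. The paper simply asserts at the outset that ``$e_3$ is parallel'' in the normal bundle (so the term $\sin\theta\,D_Xe_3$ is dropped from $\widetilde\nabla_Xk=0$) and then reads off $h^4_{11}=0$ directly from the $e_4$-component. You, more carefully, retain the coefficient $b=\langle D_{e_1}e_3,e_4\rangle$, record the linkage $\cos\theta\,h^4_{11}+\sin\theta\,b=0$, and use Codazzi to force $h^4_{11}=0$ on the open set where $h(e_2,e_2)\neq0$. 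In this respect your argument is more honest than the paper's: the parallelism of $e_3$ is not independently justified there, and is in fact equivalent (via your linkage) to the very conclusion $h^4_{11}=0$.

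Your residual worry about the locus $h(e_2,e_2)\equiv0$ is genuine, not merely a technicality. A generalized cylinder $x(s,t)=\alpha(s)+t\beta$ with $\beta\perp k$ and $\alpha$ a non-planar unit-speed curve in the hyperplane $\beta^\perp\cong\mathbb E^3$ is CPD relative to $k$ (since $h(e_1,e_2)=h(e_2,e_2)=0$ makes $e_1$ principal for every shape operator), yet has $h^4_{11}=\langle\alpha'',e_4\rangle\neq0$ whenever $\alpha''\notin\mathrm{span}\{k,\alpha'\}$. So on such flat ruled pieces the conclusion $h^4_{11}=0$ can actually fail; your instinct to segregate this case and fold it into the subsequent classification is the correct move, and the paper's proof glosses over the point.
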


\begin{proof}
By considering \eqref{CPDExpofS100} and the normal vector field $e_3$ being parallel, one can get
\begin{equation}\label{CPDApplyXExpofS100}
0=X(\cos \theta)e_1+\cos \theta \nabla_{X}e_1+%
\cos \theta h(e_1,X)-\sin \theta S_{3}X+X(\sin \theta){e_3}
\end{equation}
whenever $X$ is tangent to $M$. \eqref{CPDApplyXExpofS100} for $X=e_1$ gives
\begin{eqnarray}
\nonumber \nabla_{e_1}e_1=0,\quad \nabla_{e_1}e_2=0,\\
\label{CPDApplyXCPDExpofS1000Eq2b} h_{11}^3= -e_1(\theta),\\
\nonumber h_{11}^4=0.
\end{eqnarray}
while \eqref{CPDApplyXExpofS100} for $X=e_2$ is giving
\begin{eqnarray}
\nonumber\nabla_{e_2}e_1= \tan \theta h_{22}^3e_2, \quad \nabla_{e_2}e_2= -\tan \theta h_{22}^3e_1,\\
\nonumber h_{12}^4=0, \quad e_2(\theta)=0.
\end{eqnarray}
where $e_2$ is the other principal direction of $M$ corresponding with the principal curvature $h_{22}^3$. Thus, we have \eqref{CPDCASEISpacelikeLeviCivitaEq1ALL} and \eqref{CPDClassThmDiagonSpacelikekCod1Case11c} and \eqref{CPDCASEISpacelikeLeviCivitaEq1Theta} and the second fundamental form of $M$ becomes 
$$h(e_1,e_1)=-e_1(\theta)e_3,\quad h(e_1,e_2)=0,\quad\quad h(e_2,e_2)=h_{22}^3e_3+h_{22}^4e_4.$$
By considering the Codazzi equation \eqref{MinkCodazzi}, we obtain \eqref{CPDClassThmDiagonSpacelikekCod1Case11a} and \eqref{CPDClassThmDiagonSpacelikekCod1Case11b}.
\end{proof}
 
 Because of \eqref{CPDApplyXCPDExpofS1000Eq2b}, if $e_1(\theta)\equiv0$ implies $h_{11}^3=0$. We will consider this particular case seperately. 
 
First assume that $e_1(\theta)\neq0$. Let $p$  be a a point in $M$ at which $e_1(\theta)$ does not vanish. First, we would like to prove the following lemma.

\begin{lem}\label{CPDCase1ClassThmDiagonSpacelikekClm12}
There exists a local coordinate system  $(s,t)$ defined in a neighborhood $\mathcal N_p$ of $p$ such that the induced metric of $M$ is
\begin{equation}\label{CPDClassThmDiagonSpacelikekDefgEqRESCase1}
g=ds^2+m^2dt^2
\end{equation}
for a smooth function $m$ satisfying
\begin{equation}\label{CPDCase1ClassThmDiagonSpacelikekDefm}
e_1(m)-\tan \theta h_{22}^3m=0.
\end{equation}
Furthermore, the vector fields $e_1,e_2$ described above become $e_1=\partial_s$,  $\displaystyle e_2=\frac 1{m}\partial_t$ in $\mathcal N_p$.
\end{lem}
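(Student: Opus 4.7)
The plan is to construct the chart by flowing a unit-speed integral curve of $e_2$ along the flow of $e_1$. Since $e_1(\theta)$ does not vanish at $p$, it is nonzero on a neighborhood, and there $\{e_1,e_2\}$ is a smooth orthonormal frame to which the structure equations of Lemma \ref{CPDCase1ClassThmDiagonSpacelikekClm1} apply.

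The first step I would carry out is to compute $[e_1,e_2]$ from the Levi-Civita formulas \eqref{CPDCASEISpacelikeLeviCivitaEq1ALL}. Since $\nabla_{e_1}e_2=0$ and $\nabla_{e_2}e_1=\tan\theta\, h_{22}^3\, e_2$, we obtain
\[
[e_1,e_2]\;=\;\nabla_{e_1}e_2-\nabla_{e_2}e_1\;=\;-\tan\theta\, h_{22}^3\, e_2.
\]
The crucial structural point here is that $[e_1,e_2]$ is proportional to $e_2$ alone; this is what will force the coordinate field $\partial_t$ to stay parallel to $e_2$ under the flow.

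Next, let $\gamma:(-\varepsilon,\varepsilon)\to M$ be the integral curve of $e_2$ through $p$ parametrized by arclength, and define $\Phi(s,t):=\Phi_s^{e_1}(\gamma(t))$, the time-$s$ flow of $e_1$ starting at $\gamma(t)$. At $(0,0)$ the differential $d\Phi$ maps $\partial_s,\partial_t$ to $e_1|_p,e_2|_p$, which are linearly independent, so the inverse function theorem produces a local diffeomorphism onto a neighborhood $\mathcal N_p$ of $p$. By construction $\partial_s=e_1$. Writing $\partial_t=A\,e_1+B\,e_2$ with $A,B$ smooth, the identity $[\partial_s,\partial_t]=0$ together with the bracket formula above gives
\[
e_1(A)=0,\qquad e_1(B)=B\tan\theta\, h_{22}^3,
\]
with initial data $A(0,t)=0,\ B(0,t)=1$ coming from $\partial_t|_{s=0}=\gamma'(t)=e_2$. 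Uniqueness for the first ODE along integral curves of $e_1$ forces $A\equiv 0$, so setting $m:=B$ we obtain $\partial_t=m\,e_2$, hence $e_2=\tfrac{1}{m}\partial_t$, and the orthonormality of $\{e_1,e_2\}$ yields $g=ds^2+m^2\,dt^2$. The second ODE rewrites as $e_1(m)-\tan\theta\, h_{22}^3\, m=0$, which is \eqref{CPDCase1ClassThmDiagonSpacelikekDefm}.

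I do not expect a serious obstacle: once the bracket is computed, the argument reduces to a standard flow-box construction plus ODE uniqueness. The only care needed is to shrink $\mathcal N_p$ so that $\Phi$ remains a diffeomorphism and $m=B$ stays positive, both of which follow by continuity from the initial values at $s=0$.
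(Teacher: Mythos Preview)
Your proof is correct and follows essentially the same approach as the paper: both compute $[e_1,e_2]=-\tan\theta\,h_{22}^3\,e_2$ from \eqref{CPDCASEISpacelikeLeviCivitaEq1ALL} and exploit that this bracket lies in the span of $e_2$. The paper phrases the conclusion slightly more tersely---it observes that choosing $m$ to satisfy \eqref{CPDCase1ClassThmDiagonSpacelikekDefm} makes $[e_1,me_2]=0$ and then invokes the standard fact that commuting vector fields are coordinate fields---whereas you carry out the equivalent flow-box construction explicitly and recover $m$ as the $e_2$-component of $\partial_t$.
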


\begin{proof}
We have $[e_1,e_2]=-\tan \theta h_{22}^3e_2$ because of \eqref{CPDCASEISpacelikeLeviCivitaEq1ALL}. Thus, if $m$ is a non-vanishing smooth function on $M$ satisfying  \eqref{CPDCase1ClassThmDiagonSpacelikekDefm}, then we have $\displaystyle \left[e_1,me_2\right]=0$. Therefore, there exists a local coordinate system $(s,t)$ such that $e_1=\partial_s$ and  $\displaystyle e_2=\frac 1m\partial_t$. Thus, the induced metric of  $M$ is as given in \eqref{CPDClassThmDiagonSpacelikekDefgEqRESCase1}.
\end{proof}

Now, we are ready to obtain the classification theorem.
\begin{thm}\label{CPDClassThmDiagonSpacelikek}
Let $M$ be a regular surface in $\mathbb E^4$. Let  $M$ be a surface endowed with a canonical principal direction relative to $k=(1,0,0,0)$ and assume that the function $\theta$ defined in \eqref{CPDExpofS100} is not constant. Then, $M$ is congruent to the surface given by one of the followings
\begin{enumerate}
\item A surface given by
\begin{subequations}\label{CPDClassThmDiagonSpacelikekSurfaALL}
\begin{equation}
\label{CPDClassThmDiagonSpacelikekSurfaEq1}
x(s,t)=\Big(\int^{s}_{s_0}{\cos \theta(\tau)d\tau},{\phi_j}(t) \int^{s}_{s_0}{\sin \theta(\tau)d\tau}\Big)+\gamma(t), \quad j=2,3,4
\end{equation}
where $\gamma$ is the $\mathbb E^4$-valued function given by 
\begin{equation}\label{CPDClassThmDiagonSpacelikeSurfaEq2}
\gamma(t)=\Big(0,\int^t_{t_0}{\Psi(\tau){\phi_j}^{\prime}(\tau)d\tau}\Big).
\end{equation}
for a function $ \Psi\in C^{\infty}(M)$ and $\phi=\phi(t)$ is the unit speed curve lying on $\mathbb S^3(1)$ in $\mathbb E^4$;
\end{subequations}

\item A flat surface given by
\begin{align}\label{CPDClassThmDiagonSpacelikekSurfbEq1}
\begin{split}
x(s,t)=&\Big(\int^{s}_{s_0}{\cos \theta(\tau)d\tau},\phi_j(t_0)\int^{s}_{s_0}{\sin \theta(\tau)d\tau}\Big) +{t_0}\phi(t).
\end{split}
\end{align}
Here $\phi(t_0)$ and $\phi(t)$ are a constant vector and the unit speed curve lying on $\mathbb S^3(1)$ in $\mathbb E^4$, respectively.
\end{enumerate}

Conversely, surfaces described above are CPD relative to $k=(1,0,0,0).$
\end{thm}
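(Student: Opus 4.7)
The plan is to integrate the position vector $x(s,t)$ explicitly using the $(s,t)$-coordinates and the orthonormal frame furnished by Lemmas~\ref{CPDCase1ClassThmDiagonSpacelikekClm1} and~\ref{CPDCase1ClassThmDiagonSpacelikekClm12}. Since $e_2(\theta)=0$, in these coordinates $\theta$ depends only on $s$. Because $e_1=\partial_s$ with $\langle e_1,k\rangle=\cos\theta$ and $e_2=m^{-1}\partial_t$ with $\langle e_2,k\rangle=0$, I first obtain $\partial_sx_1=\cos\theta$ and $\partial_tx_1=0$, so that $x_1(s,t)=\int_{s_0}^{s}\cos\theta(\tau)\,d\tau$.

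For the remaining three Euclidean components, I decompose the adapted frame along $M$ in the form
\begin{equation*}
e_1=(\cos\theta,\sin\theta\,\phi),\quad e_3=(\sin\theta,-\cos\theta\,\phi),\quad e_2=(0,b),
\end{equation*}
where $\phi(s,t),b(s,t)\in\mathbb{R}^3$ are unit vectors; the shape of $e_3$ is forced by $e_3\perp e_1$, $\langle e_3,k\rangle=\sin\theta$, and the Cauchy--Schwarz equality case, after fixing a global orientation of $e_3$. Applying the Gauss formula $\widetilde\nabla_{e_1}e_1=h(e_1,e_1)=-\theta'(s)e_3$ and matching last three coordinates collapses to $\sin\theta\,\phi_s=0$, proving $\phi=\phi(t)$. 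Similarly $\widetilde\nabla_{e_1}e_2=0$ (since $\nabla_{e_1}e_2=0$ and $h(e_1,e_2)=0$) gives $b=b(t)$, and the identity $\partial_te_1=\partial_s(me_2)=m_s\,e_2$ yields the compatibility relation
\begin{equation*}
\sin\theta(s)\,\phi'(t)=m_s(s,t)\,b(t).
\end{equation*}

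Two cases then arise. If $\phi$ is non-constant, I reparametrize $t$ so that $\phi$ is unit-speed on $\mathbb S^3(1)\subset\mathbb E^4$; this forces $b=\phi'$ and $m_s=\sin\theta$, hence $m(s,t)=r(s)+\Psi(t)$ with $r(s)=\int_{s_0}^{s}\sin\theta\,d\tau$ and some smooth $\Psi$. Integrating $\partial_sy=\sin\theta\,\phi$ and $\partial_ty=(r+\Psi)\phi'$ for $y=(x_2,x_3,x_4)$ recovers, up to a translation absorbed into the base point, the parametrization~\eqref{CPDClassThmDiagonSpacelikekSurfaEq1}--\eqref{CPDClassThmDiagonSpacelikeSurfaEq2}. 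If instead $\phi\equiv\phi_0$ is constant, the compatibility relation forces $m_s\equiv 0$, so $m=m(t)$; integrating $\partial_ty=m(t)b(t)$ and reparametrizing so that the resulting curve equals $t_0\,\phi(t)$ for a unit-speed $\phi$ on $\mathbb S^3(1)$ yields~\eqref{CPDClassThmDiagonSpacelikekSurfbEq1}. The converse is a direct verification: for each parametrization one constructs the adapted frame, checks~\eqref{CPDExpofS100}, and confirms the structural equations of Lemma~\ref{CPDCase1ClassThmDiagonSpacelikekClm1}.

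I expect the hardest step to be the reduction of the Gauss identity to $\sin\theta\,\phi_s=0$: it relies on picking $e_3$ with the correct sign so that the last three components of $h(e_1,e_1)$ align with those of $-\theta'\,e_3$, and on verifying that the orthogonality and Cauchy--Schwarz arguments that determine the form of $e_3$ extend smoothly across the neighborhood. A close second is the bookkeeping in the degenerate case, where one must justify the specific normalization that produces the factor $t_0$ in~\eqref{CPDClassThmDiagonSpacelikekSurfbEq1} rather than an arbitrary smooth function of $t$.
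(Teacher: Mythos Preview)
Your approach is correct and takes a genuinely different route from the paper. The paper first combines the Codazzi equation \eqref{CPDCase1SpacelikeCods} with \eqref{CPDCase1Spacelikems} to obtain the second-order ODE $m_{ss}-\theta'\cot\theta\,m_s=0$, solves it as $m(s,t)=\Psi_1(t)\int_{s_0}^{s}\sin\theta\,d\tau+\Psi_2(t)$, and splits into the two cases according to whether $\Psi_1\not\equiv0$; the position vector is then recovered by rewriting \eqref{CPDExpofS100} as the first-order linear ODE $(1,0,0,0)=\cos\theta\,x_s-\tfrac{\sin\theta}{\theta'}x_{ss}$ and integrating. You bypass the ODE for $m$ entirely: once $e_1=(\cos\theta,\sin\theta\,\phi)$ and $e_3=(\sin\theta,-\cos\theta\,\phi)$ are written down (the Cauchy--Schwarz step is clean and works as you describe), the Gauss formula gives $\phi_s=0$ immediately, and the mixed-partial identity $\partial_te_1=\partial_s(me_2)$ yields $\sin\theta\,\phi'=m_s\,b$, from which both the case split and the explicit form of $m$ drop out. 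Your route is more geometric and avoids the second-order equation; the paper's is more mechanical but makes the role of Codazzi visible.

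One genuine gap in your Case~2 sketch: the step ``reparametrize so that the resulting curve equals $t_0\phi(t)$ for a unit-speed $\phi$ on $\mathbb S^3(1)$'' is not a reparametrization alone. After setting $m=1$ you have $c'(t)=b(t)$, a unit-speed curve in the $2$-plane $\phi_0^{\perp}\subset\mathbb R^3$, and an arbitrary such curve need not lie on any sphere. What forces the spherical form is the $e_4$-data you have not yet used: the paper invokes $x_{tt}=e_4$ together with $(e_4)_t=-x_t$ (coming from $S_4$ and the Weingarten formula) to conclude that $x_t$, hence $c$, lies on a circle. You will need the analogous step; your frame decomposition so far involves only $e_1,e_2,e_3$.
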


\begin{proof}
In order to proof the necessary condition, we assume that $M$ is a surface endowed with a CPD relative to $k=(1,0,0,0)$ with the isometric immersion $x:M\rightarrow \mathbb E^4$. Let $\{e_1,e_2;e_3,e_4\}$ be the local orthonormal frame field described as before in Lemma \ref{CPDCase1ClassThmDiagonSpacelikekClm1}, $h_{11}^3,h_{22}^3$ and $h_{22}^4$ be the principal curvatures of $M$ and $(s,t)$ a local coordinate system given in Lemma \ref{CPDCase1ClassThmDiagonSpacelikekClm12}.

Note that, \eqref{CPDClassThmDiagonSpacelikekCod1Case11a}, \eqref{CPDClassThmDiagonSpacelikekCod1Case11b} and \eqref{CPDCase1ClassThmDiagonSpacelikekDefm} become, respectively
\begin{eqnarray}
\label{CPDCase1SpacelikeCods}(h_{22}^3)_s=-\tan \theta h_{22}^3(\theta' +h_{22}^3), \\
\label{CPDCase1SpacelikeCods2} (h_{22}^4)_s=-\tan \theta h_{22}^3 h_{22}^4, \\
\label{CPDCase1Spacelikems} m_s-m\tan \theta h_{22}^3=0, 
\end{eqnarray}
Moreover, we have 
\begin{equation}\label{CPDAftClm1Eq1SCase1}
e_1= x_s.
\end{equation}
By combining \eqref{CPDCase1Spacelikems} and \eqref{CPDCase1SpacelikeCods2} with \eqref{CPDCASEISpacelikeLeviCivitaEq1} we obtain
the shape operator $S$ of $M$  as
\begin{align} \label{CPDCASEISpacelikeShapeOpm1}
\begin{split}
S_3=\left( 
\begin{array}{cc}
-\theta' & 0 \\ 
0 & \cot \theta\frac{m_s}{m}%
\end{array}%
\right), \quad
&S_4=\left(\begin{array}{cc}
0&0\\
0&\frac{1}{m}
\end{array}\right)
\end{split}
\end{align}
where $'$ denotes ordinary differentiation with respect to the appropriated variable.

By combining \eqref{CPDCase1Spacelikems} and \eqref{CPDCase1SpacelikeCods} we obtain
$$m_{ss}-\theta'\cot \theta  m_s=0
$$
whose general solution is 
$$m(s,t)=\Psi_1(t)\int^{s}_{s_0}{\sin \theta(\tau)d\tau}+\Psi_2(t) $$
for some smooth functions $\Psi_1,\Psi_2$.
Therefore, by re-defining $t$ properly, we may assume either
\begin{subequations}\label{CPDCase1Spacelikemall}
\begin{equation}\label{CPDCase1Spacelikem1} 
m(s,t)=\int^{s}_{s_0}{\sin \theta(\tau)d\tau}+\Psi(t), \Psi\in C^\infty(M), 
\end{equation}
or
\begin{equation}
\label{CPDCase1Spacelikem2} 
m(s,t)=m(t).
\end{equation}
\end{subequations}

\textbf{Case 1.} Let $m$ satisfies \eqref{CPDCase1Spacelikem1}. In this case, by considering the equation \eqref{CPDCASEISpacelikeLeviCivitaEq1ALL} with \eqref{CPDAftClm1Eq1SCase1}, we get the Levi-Civita connection of $M$ satisfies 
\begin{eqnarray} \nonumber
\nabla _{\partial_s }\partial_s = 0, \quad \nabla _{\partial_s }\partial_t = \nabla _{\partial_t }\partial_s =\frac{m_s}{m}\partial_t, 
\quad \nabla _{\partial_t }\partial_t = -m{m_s}\partial_s+\frac{m_t}{m}\partial_t.
\end{eqnarray}%
By combining the first equation given above with \eqref{CPDCASEISpacelikeShapeOpm1} and using Gauss formula \eqref{MEtomGauss}, we have
\begin{eqnarray}
\label{CPDCase1Spacelikexss} x_{ss}&=&-\theta' e_3.
\end{eqnarray}
On the other hand, we have $\left\langle x_s,k\right\rangle=\cos \theta$ and $\left\langle x_t,k\right\rangle=0$ from the decomposition \eqref{CPDExpofS100}. By considering these equations, we see that  $x$ has the form of
\begin{equation}\label{CPDCase1Spacelikex}
x(s,t)=\left(\int^{s}_{s_0}{\cos \theta(\tau)d\tau},x_2(s,t),x_3(s,t),x_4(s,t)\right)+\gamma(t)
\end{equation}
for a $\mathbb E^4$-valued smooth function $\gamma=\left(0,\gamma_{2},\gamma_{3},\gamma_{4}\right)$.
 On the other hand, by considering \eqref{CPDAftClm1Eq1SCase1} and \eqref{CPDCase1Spacelikexss} in \eqref{CPDExpofS100}, we yield 
\begin{equation}\label{CPDCase1Spacelikex2} 
(1,0,0,0)=\cos \theta x_s-\frac{\sin \theta}{\theta'}x_{ss}.
\end{equation}

By solving \eqref{CPDCase1Spacelikex2} and considering $\left\langle x_s,x_s\right\rangle=1$, we obtain
\begin{align}\label{CPDCase1Spacelikenewx}
\begin{split}
x(s,t)=&\int^{s}_{s_0}{\cos \theta(\tau)d\tau}\Big(1,0,0,0\Big)+ \phi(t)\int^{s}_{s_0}{\sin \theta(\tau)d\tau}+\gamma(t),
\end{split}
\end{align}
where $\phi(t)=\Big(0,\phi_{2}(t),\phi_{3}(t),\phi_{4}(t)\Big)$ is the curve lying on $\mathbb S^3(1)$ in $\mathbb E^4$. 
Now, by considering $\displaystyle x_{st}=\frac{m_s}{m}x_t$ in \eqref{CPDCase1Spacelikenewx}, we can rewrite this parametrization as 

\begin{align}\label{CPDCase1Spacelikenewx2}
\begin{split}
x(s,t)=&\int^{s}_{s_0}{\cos \theta(\tau)d\tau}\Big(1,0,0,0\Big)+\phi(t) \int^{s}_{s_0}{\sin \theta(\tau)d\tau}+\int^t_{t_0}{\Psi(\tau)\phi^{\prime}(\tau)d\tau},
\end{split}
\end{align}
where $\Psi=\Psi(t)$ is a smooth function and $'$ denotes ordinary differentiation with respect to the parameter $t$. Also, since $\left\langle x_t,x_t\right\rangle=m^2$, we yield the curve $\phi$ parameterized by arc-lenght parameter $t$. Thus, we have the Case (1) of the theorem.

\textbf{Case 2.}  Let $m$ satisfy \eqref{CPDCase1Spacelikem2}. Here, we can take $m(t)=1$ by re-defining $t$ properly. In this case, the induced metric given in \eqref{CPDClassThmDiagonSpacelikekDefgEqRESCase1} of $M$ becomes $g=ds^2+dt^2$, the Levi Civita connection of $M$ satisfies 
\begin{equation}
\nabla _{\partial_s }\partial_s =0,\quad \nabla _{\partial_s }\partial_t=0, \quad%
\nabla _{\partial_t}\partial_t=0.
\end{equation}%
Also, considering $m=1$ in \eqref{CPDClassThmDiagonSpacelikekCod1Case11b} and \eqref{CPDCase1Spacelikems}, thus \eqref{CPDCASEISpacelikeLeviCivitaEq1} becomes
\begin{align} \label{CPDCASEISpacelikeShapeOpm2}
\begin{split}
S_3=\left(\begin{array}{cc}
-\theta'&0\\
0&0
\end{array}\right), \quad
&S_4=\left(\begin{array}{cc}
0&0\\
0&1
\end{array}\right).
\end{split}
\end{align}
Therefore, $x$ and the normal vectors $e_3,e_4$ satisfy
\begin{eqnarray}\nonumber
\nonumber x_{ss}=-\theta' e_3,\quad x_{st}=0,&& \quad x_{tt}= e_4.\\\nonumber
(e_3)_s=-\theta'x_s,\quad (e_3)_t=0,\\\nonumber
(e_4)_s=0, \quad (e_4)_t=- x_t.
\end{eqnarray}
A straightforward computation yields that $M$ is congruent to the surface given in Case (2) of the theorem. Hence, the proof for the necessary condition is obtained.

The proof of sufficient condition follows from a direct computation.
\end{proof}


Now, assume that the function $\theta$ defined in  \eqref{CPDExpofS100} satisfied $e_1(\theta)=0$. In this case, Lemma \ref{CPDCase1ClassThmDiagonSpacelikekClm1} gives
\begin{lem}\label{Case1ClassThmDiagonSpacelikekClm10}
The Levi-Civita connection $\nabla$ of $M$ is given by
\begin{subequations} \label{CASEISpacelikeLeviCivitaEq1ALL0}
\begin{eqnarray}
\label{CASEISpacelikeLeviCivitaEq1a0}\nabla _{e_{1}}e_{1}=\nabla _{e_{1}}e_{2}=0, &&
\\\label{CASEILeviCivitaEq1b0}
\nabla _{e_{2}}e_{1}=\tan \theta h_{22}^3e_2, &\quad &\nabla _{e_{2}}e_{2}=-\tan \theta h_{22}^3e_1. 
\end{eqnarray}
\end{subequations}
and the matrix representations of shape operator $S$ of $M$ with respect to $\{e_1,e_2\}$ is
\begin{align}\label{CASEISpacelikeLeviCivitaEq10}
\begin{split}
S_3=\left(\begin{array}{cc}
0&0\\
0&h_{22}^3
\end{array}\right), \quad
&S_4=\left(\begin{array}{cc}
0&0\\
0&h_{22}^4
\end{array}\right)
\end{split}
\end{align}
and coefficients of the second fundamental form satisfying
\begin{subequations} \label{ClassThmDiagonSpacelikekCod1Case1ALL0}
\begin{eqnarray} \label{ClassThmDiagonSpacelikekCod1Case11a0}
e_1(h_{22}^3)=-\tan \theta (h_{22}^3)^2, &&
\\\label{ClassThmDiagonSpacelikekCod1Case11b0}
e_1(h_{22}^4)=-\tan \theta h_{22}^3 h_{22}^4, &&
\\\label{ClassThmDiagonSpacelikekCod1Case11c0}
h_{11}^3=0, \quad h_{11}^4=0, \quad h_{12}^3=0, \quad h_{12}^4=0.
\end{eqnarray}
\end{subequations}
Note that here the angle $\theta$ is a non-zero constant.
\end{lem}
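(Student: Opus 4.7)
The plan is that this statement is essentially a corollary of the preceding Lemma \ref{CPDCase1ClassThmDiagonSpacelikekClm1} under the extra hypothesis $e_1(\theta)=0$, so most of the work has already been done. First I would invoke that lemma verbatim, which immediately delivers the connection equations \eqref{CASEISpacelikeLeviCivitaEq1ALL0}, the vanishing $h_{11}^4=h_{12}^4=0$ in \eqref{ClassThmDiagonSpacelikekCod1Case11c0}, and the Codazzi equation \eqref{ClassThmDiagonSpacelikekCod1Case11b0} (which is just \eqref{CPDClassThmDiagonSpacelikekCod1Case11b} unchanged). Note that nothing in the derivation of Lemma \ref{CPDCase1ClassThmDiagonSpacelikekClm1} used $e_1(\theta)\neq0$, so everything there remains available.

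Next I would substitute the hypothesis $e_1(\theta)=0$ into equation \eqref{CPDApplyXCPDExpofS1000Eq2b}, which reads $h_{11}^3=-e_1(\theta)$. This immediately gives $h_{11}^3=0$, completing \eqref{ClassThmDiagonSpacelikekCod1Case11c0} and reducing \eqref{CPDCASEISpacelikeLeviCivitaEq1} to the form \eqref{CASEISpacelikeLeviCivitaEq10}. Plugging $h_{11}^3=0$ into the Codazzi equation \eqref{CPDClassThmDiagonSpacelikekCod1Case11a} then yields
\[
e_1(h_{22}^3)=\tan\theta\, h_{22}^3(0-h_{22}^3)=-\tan\theta (h_{22}^3)^2,
\]
which is exactly \eqref{ClassThmDiagonSpacelikekCod1Case11a0}.

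For the final assertion that $\theta$ is a non-zero constant, I would combine the present hypothesis $e_1(\theta)=0$ with \eqref{CPDCASEISpacelikeLeviCivitaEq1Theta}, namely $e_2(\theta)=0$, established in Lemma \ref{CPDCase1ClassThmDiagonSpacelikekClm1}. Since $\{e_1,e_2\}$ is a local frame on $M$, both directional derivatives of $\theta$ vanish and $\theta$ is locally constant. The non-vanishing of $\theta$ comes from excluding the degenerate case in which $k$ would be entirely tangent to $M$: if $\theta\equiv 0$, the decomposition \eqref{CPDExpofS100} forces $k=e_1$ with no normal component, in which case the factor $\tan\theta$ appearing throughout the structure equations collapses and the situation falls outside the framework of a genuine CPD surface considered here.

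The routine part is just the bookkeeping above; I do not expect any real obstacle, since the only new input beyond Lemma \ref{CPDCase1ClassThmDiagonSpacelikekClm1} is the single substitution $e_1(\theta)=0$. If anything requires a word of justification it is the exclusion $\theta\neq 0$, but that is a matter of interpretation of the statement rather than of computation.
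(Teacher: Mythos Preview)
Your proposal is correct and matches the paper's approach exactly: the paper simply states that under the assumption $e_1(\theta)=0$, Lemma~\ref{CPDCase1ClassThmDiagonSpacelikekClm1} gives the present lemma, with no further argument. Your write-up just makes explicit the substitutions $h_{11}^3=-e_1(\theta)=0$ into \eqref{CPDCASEISpacelikeLeviCivitaEq1} and \eqref{CPDClassThmDiagonSpacelikekCod1Case11a}, and the constancy of $\theta$ from $e_1(\theta)=e_2(\theta)=0$, which the paper leaves implicit.
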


Next, we obtain the following local coordinate system on a neighborhood of  a point $p\in M$.
\begin{lem}\label{Case1ClassThmDiagonSpacelikekClm120}
There exists a local coordinate system  $(s,t)$ defined in a neighborhood $\mathcal N_p$ of $p$ such that the induced metric of $M$ is
\begin{equation}\label{ClassThmDiagonSpacelikekDefgEqRESCase10}
g=ds^2+m^2dt^2
\end{equation}
for a smooth function $m$ satisfying
\begin{equation}\label{Case1ClassThmDiagonSpacelikekDefm0}
e_1(m)-m \tan \theta h_{22}^3=0.
\end{equation}
Here, the angle $\theta$ is a non-zero constant. Furthermore, the vector fields $e_1,e_2$ described above become $e_1=\partial_s$,  $\displaystyle e_2=\frac 1{m}\partial_t$ in $\mathcal N_p$.
\end{lem}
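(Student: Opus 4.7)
The plan is to mirror, essentially verbatim, the strategy used for Lemma \ref{CPDCase1ClassThmDiagonSpacelikekClm12}, since the Levi-Civita connection formulas \eqref{CASEISpacelikeLeviCivitaEq1ALL0} in the new lemma coincide formally with those in \eqref{CPDCASEISpacelikeLeviCivitaEq1ALL}. The only ingredient that has changed between the two settings is that now $\theta$ is a non-zero constant (which was already recorded at the end of Lemma \ref{Case1ClassThmDiagonSpacelikekClm10}), and that $h_{11}^3$ has dropped out. Neither of these affects the bracket computation.

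First, I would compute the Lie bracket $[e_1,e_2]=\nabla_{e_1}e_2-\nabla_{e_2}e_1$. From \eqref{CASEISpacelikeLeviCivitaEq1a0} the first term vanishes and from \eqref{CASEILeviCivitaEq1b0} the second term equals $\tan\theta\, h_{22}^3 e_2$, so
\[
[e_1,e_2]=-\tan\theta\, h_{22}^3 e_2.
\]
Next, I would look for a nowhere-vanishing smooth function $m$, defined on some neighborhood of $p$, satisfying the first-order linear equation \eqref{Case1ClassThmDiagonSpacelikekDefm0}. A short calculation using the Leibniz rule shows that this is exactly the condition needed to make $[e_1, me_2]=0$, because
\[
[e_1,me_2]= e_1(m)\,e_2+m[e_1,e_2]=\bigl(e_1(m)-m\tan\theta\, h_{22}^3\bigr)e_2.
\]

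Existence of such an $m$ near $p$ is not a real obstacle: \eqref{Case1ClassThmDiagonSpacelikekDefm0} is an ODE along the integral curves of $e_1$, so one prescribes positive initial data on a smooth transversal to $e_1$ through $p$ and solves. Once $m$ is in hand, $e_1$ and $me_2$ are two pointwise linearly independent, smooth, mutually commuting vector fields on a neighborhood of $p$. By the standard simultaneous-flow-box theorem, there exists a local coordinate system $(s,t)$ on a neighborhood $\mathcal N_p$ with
\[
\partial_s=e_1,\qquad \partial_t=me_2,
\]
and in particular $e_2=(1/m)\partial_t$.

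Finally, reading off the metric in these coordinates from the orthonormality of $\{e_1,e_2\}$ gives $g(\partial_s,\partial_s)=1$, $g(\partial_s,\partial_t)=0$ and $g(\partial_t,\partial_t)=m^2$, which is precisely \eqref{ClassThmDiagonSpacelikekDefgEqRESCase10}. The statement that $\theta$ is a non-zero constant is simply inherited from Lemma \ref{Case1ClassThmDiagonSpacelikekClm10} and requires no further argument. There is no genuine hard step here; the only thing to be careful about is to choose the initial data for $m$ to be positive so that $m$ does not vanish on $\mathcal N_p$ (possibly shrinking $\mathcal N_p$), which guarantees the coordinate vector fields are linearly independent throughout.
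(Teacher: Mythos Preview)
Your proof is correct and follows exactly the same approach as the paper's proof: compute $[e_1,e_2]=-\tan\theta\,h_{22}^3 e_2$ from \eqref{CASEISpacelikeLeviCivitaEq1ALL0}, observe that \eqref{Case1ClassThmDiagonSpacelikekDefm0} is precisely the condition making $[e_1,me_2]=0$, and then invoke commuting vector fields to obtain the coordinate system and the form of the metric. In fact you supply more detail than the paper does (the explicit bracket computation, the existence of $m$ via an ODE along the integral curves of $e_1$, and the positivity of the initial data), all of which the paper leaves implicit.
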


\begin{proof}
We have $[e_1,e_2]=-\tan \theta h_{22}^3e_2$ because of \eqref{CASEISpacelikeLeviCivitaEq1ALL0}. Thus, if $m$ is a non-vanishing smooth function on $M$ satisfying  \eqref{Case1ClassThmDiagonSpacelikekDefm0}, then we have $\displaystyle \left[e_1,me_2\right]=0$. Therefore, there exists a local coordinate system $(s,t)$ such that $e_1=\partial_s$ and  $\displaystyle e_2=\frac 1{m}\partial_t$. Thus, the induced metric of  $M$ is as given in \eqref{ClassThmDiagonSpacelikekDefgEqRESCase10}.
\end{proof}

Now, we are ready to obtain the classification theorem.
\begin{thm}\label{ClassThmDiagonSpacelikek0}
Let $M$ be a regular surface in $\mathbb E^4$. Let  $M$ be a surface endowed with a canonical principal direction relative to $k=(1,0,0,0)$ and assume that the function $\theta$ defined in \eqref{CPDExpofS100} is   constant. Then, $M$ is congruent to the surface given by one of the followings
\begin{enumerate}
\item A surface given by
\begin{subequations}\label{ClassThmDiagonSpacelikekSurfaALL0}
\begin{equation}
\label{ClassThmDiagonSpacelikekSurfaEq10}
x(s,t)={s}\Big(\cos \theta,\phi_j(t)\sin \theta\Big)+\gamma(t), \quad j=2,3,4
\end{equation}
where $\gamma$ is the $\mathbb E^4$-valued function given by 
\begin{equation}\label{ClassThmDiagonSpacelikeSurfaEq20}
\gamma(t)=\Big(0,\sin \theta \int^t_{t_0}{\phi^{\prime}(\tau)\Psi (\tau)d\tau}\Big).
\end{equation}
Here, $ \Psi\in C^{\infty}(M)$ and $\phi$ is the unit speed curve lying on $\mathbb S^3(1)$ in $\mathbb E^4$ such that $\left\langle \gamma^{ \prime}(t),\phi(t) \right\rangle=0$ ;
\end{subequations}

\item A flat surface given by
\begin{align}\label{ClassThmDiagonSpacelikekSurfbEq10}
\begin{split}
x(s,t)={s} \Big({\cos \theta},\phi_j(t_0){\sin \theta}\Big)+\phi(t), \quad j=2,3,4
\end{split}
\end{align}
where $\phi(t_0)=(0,\phi_j(t_0))$ lying on $S^3(1)$ in $\mathbb E^4$ is a constant vector perpendicular to the vector $(1,0,0,0)$.
\end{enumerate}

Conversely, surfaces described above are CPD relative to $k=(1,0,0,0).$

\end{thm}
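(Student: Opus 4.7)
My plan is to follow the strategy of the proof of Theorem \ref{CPDClassThmDiagonSpacelikek}, exploiting the simplifications provided by $\theta$ being a nonzero constant. First I would set up using Lemmas \ref{Case1ClassThmDiagonSpacelikekClm10} and \ref{Case1ClassThmDiagonSpacelikekClm120}, so that in the local coordinates $(s,t)$ one has $e_1 = \partial_s$, $e_2 = \tfrac{1}{m}\partial_t$, and $x_s = e_1$. The key observation is that \eqref{ClassThmDiagonSpacelikekCod1Case11c0} gives $h(\partial_s, \partial_s) = 0$, and together with $\nabla_{\partial_s}\partial_s = 0$ from \eqref{CASEISpacelikeLeviCivitaEq1ALL0}, the Gauss formula \eqref{MEtomGauss} immediately yields $x_{ss} = 0$. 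Therefore $x$ is affine in $s$:
\[
x(s,t) = s\, A(t) + B(t)
\]
for some $\mathbb{E}^4$-valued smooth functions $A$ and $B$.

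Next, I would determine the first coordinates of $A$ and $B$ from the CPD decomposition. Taking inner products of \eqref{CPDExpofS100} with $k=(1,0,0,0)$, using $x_s = e_1$ and $\langle x_t, e_1\rangle = 0$, gives $\langle A, k\rangle = \cos\theta$ (constant in $t$) and $\langle B'(t), k\rangle = 0$. Together with $|A(t)| = 1$, this forces
\[
A(t) = \bigl(\cos\theta,\, \sin\theta\, \phi_j(t)\bigr),\qquad \phi=(0,\phi_2,\phi_3,\phi_4)\in\mathbb{S}^3(1),
\]
and, after discarding a harmless constant translation along $k$, $B(t)$ has vanishing first coordinate.

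The remaining step is to pin down $m$ and $B$. Combining \eqref{ClassThmDiagonSpacelikekCod1Case11a0} with \eqref{Case1ClassThmDiagonSpacelikekDefm0} yields $\partial_s(m\, h_{22}^3) = 0$, so $m\, h_{22}^3$ depends only on $t$ and $m$ is affine in $s$. After a suitable reparametrization of $t$, this leaves two possibilities: either $m(s,t) = \sin\theta\,(s + \Psi(t))$ (Case 1) or $m\equiv 1$ (Case 2). Then I would expand the metric identity $|x_t|^2 = m^2$ as a polynomial identity in $s$. In Case 1, matching coefficients of $s^2$, $s$, and $s^0$ gives $|\phi'(t)|=1$ (so $\phi$ is a unit speed curve on $\mathbb{S}^3(1)$) and $B'(t) = \sin\theta\,\Psi(t)\,(0,\phi'(t))$, which integrates to the $\gamma(t)$ of \eqref{ClassThmDiagonSpacelikeSurfaEq20}. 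In Case 2, the $s^2$ coefficient forces $A'(t)\equiv 0$, so $A$ is a constant vector $(\cos\theta,\, \sin\theta\,\phi_j(t_0))$, and $|B'(t)|^2 = 1$ produces the unit speed curve $\phi(t)$ appearing in \eqref{ClassThmDiagonSpacelikekSurfbEq10}.

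The main obstacle I anticipate is executing the reparametrization cleanly so that the standardized forms of $m$ match the theorem's explicit parametrizations without residual ambiguity, and correctly tracking where each integration constant is absorbed. Orthogonality $\langle x_s, x_t\rangle = 0$ turns out to be automatic in Case 1 because $\phi\subset\mathbb{S}^3(1)$ forces $\langle\phi,\phi'\rangle=0$, whereas in Case 2 it imposes the genuine constraint $B'(t)\perp A$ on the curve. The converse direction then reduces to a direct verification that each explicit surface satisfies \eqref{CPDExpofS100} for appropriate choices of $e_1,e_3$ and has shape operators of the diagonal form \eqref{CASEISpacelikeLeviCivitaEq10}.
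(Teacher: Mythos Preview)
Your proposal is correct and follows essentially the same route as the paper: the same setup via Lemmas \ref{Case1ClassThmDiagonSpacelikekClm10} and \ref{Case1ClassThmDiagonSpacelikekClm120}, the key reduction $x_{ss}=0$ from the Gauss formula, and the same dichotomy on $m$ after integrating \eqref{ClassThmDiagonSpacelikekCod1Case11a0} together with \eqref{Case1ClassThmDiagonSpacelikekDefm0}. The only difference is cosmetic: in Case~1 the paper pins down $\gamma$ using the mixed relation $x_{st}=\tfrac{m_s}{m}x_t$, whereas you extract it from the polynomial identity $|x_t|^2=m^2$ in $s$ (which works because the Cauchy--Schwarz equality forces $B'$ parallel to $(0,\phi')$); in Case~2 the paper writes out the full first-order system for $x,e_3,e_4$ before integrating, while you read off $A'=0$ and $|B'|=1$ directly from the metric.
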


\begin{proof}
 Let $\{e_1,e_2;e_3,e_4\}$ be the local orthonormal frame field and coefficients of the second fundamental form described as before in Lemma \ref{Case1ClassThmDiagonSpacelikekClm10}, $(s,t)$ a local coordinate system given in Lemma \ref{Case1ClassThmDiagonSpacelikekClm120}.

Note that \eqref{ClassThmDiagonSpacelikekCod1Case11a0}, \eqref{ClassThmDiagonSpacelikekCod1Case11b0} and \eqref{Case1ClassThmDiagonSpacelikekDefm0} become, respectively
\begin{eqnarray}
\label{Case1SpacelikeCods0}(h_{22}^3)_s=-\tan \theta (h_{22}^3)^2,\\
\label{Case1SpacelikeCods20} (h_{22}^4)_s+\tan \theta h_{22}^3 h_{22}^4 =0,\\
\label{Case1Spacelikems0} m_s-m\tan \theta h_{22}^3=0.
\end{eqnarray}
Moreover, we have 
\begin{equation}\label{AftClm1Eq1SCase10}
e_1= x_s.
\end{equation}
By combining \eqref{Case1Spacelikems0} with \eqref{CASEISpacelikeLeviCivitaEq10} we obtain
the shape operator $S$ of $M$  as
\begin{align} \label{CASEISpacelikeShapeOpm10}
\begin{split}
S_3=\left( 
\begin{array}{cc}
0 & 0 \\ 
0 & \cot \theta\frac{m_s}{m}%
\end{array}%
\right)\quad
&S_4=\left(\begin{array}{cc}
0&0\\
0&\frac{1}{m}
\end{array}\right)
\end{split}
\end{align}
where $\theta$ is a non-zero constant.

By combining \eqref{Case1Spacelikems0} and \eqref{Case1SpacelikeCods0} we get
$$m(s,t)=\Psi_1(t)\Big({s}+\Psi_2(t)\Big) $$
for some smooth functions $\Psi_1,\Psi_2$.
Therefore, by re-defining $t$ properly, we may assume either
\begin{subequations}\label{Case1Spacelikemall0}
\begin{equation}\label{Case1Spacelikem10} 
m(s,t)=\sin \theta({s}+\Psi(t)), \Psi\in C^\infty(M), 
\end{equation}
or
\begin{equation}
\label{Case1Spacelikem20} 
m(s,t)=m(t).
\end{equation}
\end{subequations}

\textbf{Case 1.} Let $m$ satisfies \eqref{Case1Spacelikem10}. In this case, by considering the equation \eqref{CASEISpacelikeLeviCivitaEq1ALL0} with \eqref{AftClm1Eq1SCase10}, we get the Levi-Civita connection of $M$ satisfies 
\begin{eqnarray} \nonumber
\nabla _{\partial_s }\partial_s = 0, \quad \nabla _{\partial_s }\partial_t = \nabla _{\partial_t }\partial_s =\frac{m_s}{m}\partial_t, 
\quad \nabla _{\partial_t }\partial_t = -m{m_s}\partial_s+\frac{m_t}{m}\partial_t.
\end{eqnarray}%
 By combining these equations with \eqref{CASEISpacelikeShapeOpm10} and using Gauss formula \eqref{MEtomGauss}, we obtain
\begin{eqnarray}
\label{Case1Spacelikexss0} x_{ss}&=&0.
\end{eqnarray}
On the other hand, from the decomposition \eqref{CPDExpofS100}, we have $\left\langle x_s,k\right\rangle=\cos \theta$ and  $\left\langle x_t,k\right\rangle=0.$ By considering these equations, we see that  $x$ has the form of
\begin{equation}\label{Case1Spacelikex0}
x(s,t)=\Big(s {\cos \theta},x_j(s,t)+\gamma_j(t)\Big), \quad j=2,3,4.
\end{equation}
Here $\gamma(t)=\left(0,\gamma_j(t)\right)$ is a $\mathbb E^4$-valued smooth function. On the other hand, since \eqref{Case1Spacelikexss0} and $\left\langle x_s,x_s\right\rangle=1$, we get $\phi(t)$ is a curve lying on $S^3(1)$ in $\mathbb E^4$ with $\phi(t)=(0,\phi_j(t))$. So, if the parametrization reorder, we get
\begin{align}\label{Case1Spacelikenewx0}
\begin{split}
x(s,t)=&s \Big(\cos \theta, \phi_j(t){\sin \theta}\Big)+\gamma(t).
\end{split}
\end{align}
Now, by considering $\displaystyle x_{st}=\frac{m_s}{m}x_t$ in \eqref{Case1Spacelikenewx0}, we can rewrite the parametrization as 

\begin{align}\label{Case1Spacelikenewx20}
\begin{split}
x(s,t)=&{s}\Big(\cos \theta,\phi_j(t){\sin \theta}\Big)+\sin \theta \int_{t_0}^t{\Psi(\tau)\phi^{\prime}(\tau)d\tau},
\end{split}
\end{align}
where $\Psi=\Psi(t)$ is a smooth function. Also, since $\left\langle x_t,x_t\right\rangle=m^2$, we yield the curve $\phi$ parameterized by arc-lenght parameter $t$. Thus, we have the Case (1) of the theorem.

\textbf{Case 2.}  $m$ is given as \eqref{Case1Spacelikem20}. In this case, the induced metric of $M$ becomes $g=ds^2+dt^2$, the Levi Civita connection of $M$ satisfies 
\begin{equation}
\nabla _{\partial_s }\partial_s =0,\quad \nabla _{\partial_s }\partial_t=0, \quad%
\nabla _{\partial_t}\partial_t=0.
\end{equation}%
Also, considering \eqref{Case1Spacelikem20} in \eqref{ClassThmDiagonSpacelikekCod1Case11b0} and \eqref{Case1Spacelikems0}, thus \eqref{CASEISpacelikeLeviCivitaEq10} becomes
\begin{align} \label{CASEISpacelikeShapeOpm20}
\begin{split}
S_3=\left(\begin{array}{cc}
0&0\\
0&0
\end{array}\right), \quad
&S_4=\left(\begin{array}{cc}
0&0\\
0&1
\end{array}\right)
\end{split}
\end{align}
where $K(t)$ is a smooth function. Therefore, $x$ and the normal vectors $e_3,e_4$ satisfy
\begin{eqnarray}\nonumber
\nonumber x_{ss}=0,\quad x_{st}=0, \quad x_{tt}= e_4.\\\nonumber
(e_3)_s=0,\quad (e_3)_t=0,\\\nonumber
(e_4)_s=0, \quad (e_4)_t=- x_t.
\end{eqnarray}
A straightforward computation yields that $M$ is congruent to the surface given in Case (2) of the theorem. Hence, the proof for the necessary condition is obtained.

The proof of sufficient condition follows from a direct computation.
\end{proof}

\section*{Acknowledgments}
This paper is a part of PhD thesis of the first named author who is supported by The Scientific and Technological Research Council of Turkey (TUBITAK) as a PhD scholar.


\end{document}